\newcounter{spec}
{\end{list}}
\renewcommand{\P}{{\mathbf P}}
\newcommand{\Z}{{\mathbb Z}}
\newcommand{\Q}{{\mathbb Q}}
\newcommand{\C}{{\mathbb C}}
\newcommand{\Hom}{{\operatorname{Hom}}}
\newcommand{\Spec}{{\operatorname{Spec \ }}}
\renewcommand{\lim}{\varprojlim}
\def\hfl#1#2#3{\smash{\mathop{\hbox to#3{\rightarrowfill}}\limits
^{\scriptstyle#1}_{\scriptstyle#2}}}
\def\gfl#1#2#3{\smash{\mathop{\hbox to#3{\leftarrowfill}}\limits
^{\scriptstyle#1}_{\scriptstyle#2}}}
\def\vfl#1#2#3{\llap{$\scriptstyle #1$}
\left\downarrow\vbox to#3{}\right.\rlap{$\scriptstyle #2$}}
\def\fleche(#1,#2)\dir(#3,#4)\long#5{%
\noalign{\leftput(#1,#2){\vector(#3,#4){#5}}}}
\def\ligne(#1,#2)\dir(#3,#4)\long#5{%
\noalign{\leftput(#1,#2){\lline(#3,#4){#5}}}}
\def\put(#1,#2)#3{\noalign{\setbox1=\hbox{%
    \kern #1\unitlength
    \raise #2\unitlength\hbox{$#3$}}%
    \ht1=0pt \wd1=0pt \dp1=0pt\box1}}
\numberwithin{equation}{section}
\newfont{\gothic}{eufb10}
\newtheorem{theo}{Th\'{e}or\`{e}me}[section]
\newtheorem{prop}[theo]{Proposition}
\newtheorem{lem}[theo]{Lemme}
\newtheorem{cor}[theo]{Corollaire}
\theoremstyle{definition}
\newtheorem{defi}[theo]{D\'efinition}
\theoremstyle{remark}
\newtheorem{rema}[theo]{Remarque}
\newcommand{\bthe}{\begin{theo}}
\newcommand{\ble}{\begin{lem}}
\newcommand{\bpr}{\begin{prop}}
\newcommand{\bco}{\begin{cor}}
\newcommand{\bde}{\begin{defi}}
\newcommand{\ethe}{\end{theo}}
\newcommand{\ele}{\end{lem}}
\newcommand{\epr}{\end{prop}}
\newcommand{\eco}{\end{cor}}
\newcommand{\ede}{\end{defi}}
\newcommand{\et}{{\operatorname{\acute{e}t}}}
\newcommand{\G}{{\mathbb G}}
\def\A{{\bf A}}
\def\C{{\bf C}}
 \def\Q{{\bf Q}}
 \def\Z{{\bf Z}}
\def\G{{\bf G}}
\def\P{{\bf P}}
\def\aA{{\mathbb A}}
\DeclareFontFamily{U}{wncy}{}
\DeclareFontShape{U}{wncy}{m}{n}{%
<5>wncyr5%
<6>wncyr6%
<7>wncyr7%
<8>wncyr8%
<9>wncyr9%
<10>wncyr10%
<11>wncyr10%
<12>wncyr6%
<14>wncyr7%
<17>wncyr8%
<20>wncyr10%
<25>wncyr10}{}
\DeclareMathAlphabet{\cyr}{U}{wncy}{m}{n}
\begin{document}

  \title[ ] {Approximation forte pour les espaces homog\`enes de groupes semi-simples  sur le corps des fonctions d'une courbe  alg\'ebrique complexe}

\author{J.-L. Colliot-Th\'el\`ene}
\address{Universit\'e Paris Sud\\Math\'ematiques, B\^atiment 425\\91405 Orsay Cedex\\France}
\email{jlct@math.u-psud.fr }

\date{15  avril 2016}
\maketitle

 \begin{abstract}
Sur un corps de fonctions d'une variable sur le corps des complexes,
 l'approximation forte hors d'un ensemble fini non vide de places vaut pour pour
 tout espace homog\`ene d'un groupe semi-simple simplement connexe.
  En particulier l'approximation forte hors d'un ensemble fini non vide de places 
 vaut pour les quadriques affines lisses de dimension au moins 2 sur un tel corps.
 L'approximation forte hors  d'un ensemble fini de places ne vaut pas pour le groupe multiplicatif.
   \end{abstract}
   
   \begin{altabstract} Let $K$ be the function field of a curve over the complex field.
Let $X$ be a homogeneous space of  a semisimple linear algebraic group  over $K$.
   Strong approximation holds for $X$  outside any  finite nonempty set of places of $K$.
 Strong approximation fails for tori over $K$.
   \end{altabstract}

\begin{center}

\end{center}

\section{Introduction}

Sur un corps de nombres,  l'approximation faible, resp.  l'approximation forte, 
ont  \'et\'e depuis longtemps discut\'ees
pour les groupes lin\'eaires connexes, leurs espaces homog\`enes, et
certaines vari\'et\'es rationnellement connexes. 
Cette \'etude va de pair avec celle du principe local-global pour l'existence
de points rationnels, resp. de points entiers.
Sur un corps de nombres, on sait  qu'une condition n\'ecessaire
pour  qu'une vari\'et\'e lisse satisfasse l'approximation forte est qu'elle soit
g\'eom\'etriquement simplement connexe (Kneser \cite{K} pour les groupes,
 Minchev \cite{M} en g\'en\'eral).

Depuis une dizaine d'ann\'ees, les techniques de d\'eformation de courbes en
g\'eom\'etrie alg\'ebrique complexe sont appliqu\'ees  \`a l'\'etude
de ces questions pour les vari\'et\'es rationnellement connexes
d\'efinies sur un corps $K=\C(\Gamma)$  
de fonctions d'une variable sur les
complexes,  situation a priori plus simple que celle des  vari\'et\'es sur les  corps de nombres.
Dans l'article r\'ecent \cite{ChZh}, Chen et Zhu \'etablissent l'approximation forte sur $K$
pour le compl\'ementaire d'hypersurfaces lisses dans les intersections compl\`etes lisses
dans un espace projectif, sous certaines conditions sur les  multidegr\'es.

Un cas tr\`es  particulier   de leurs r\'esultats
est celui  des quadriques affines d'\'equation
$\sum_{i=1}^n a_{i} x_{i}^2=b,$
avec les $a_{i}$ et $b$ dans $K^*$ et $n \geq 4$, pour lesquelles ils 
\'etablissent l'approximation forte, et en particulier un principe local-global pour les points entiers, analogues d'un
r\'esultat de M. Kneser sur les corps de nombres.

Pour les espaces homog\`enes  de groupes lin\'eaires connexes sur $K=\C(\Gamma)$, avec P. Gille nous avons 
\'etabli l'approximation faible \cite{CTG}. Un th\'eor\`eme de Harder \cite{H}
donne l'approximation forte (en dehors d'un ensemble fini non vide de places)
pour les groupes semi-simples simplement connexes. La question ne semble pas
avoir \'et\'e \'etudi\'ee pour les autres groupes 
lin\'eaires et  leurs espaces homog\`enes.

Dans cette note, je remarque  que pour $K=\C(\Gamma)$
 l'approximation forte  hors d'un ensemble fini non vide de places
vaut pour tout  $K$-groupe semisimple et plus g\'en\'eralement pour tout 
espace homog\`ene d'un $K$-groupe semi-simple (Th\'eor\`eme \ref{appfortesemi-simple}).
La d\'emonstration combine le th\'eor\`eme de Harder avec un th\'eor\`eme (Th\'eor\`eme \ref{surjH1}) 
dont la d\'emonstration \'etait d\'ej\`a essentiellement dans \cite{CTG}, et qui repose sur
le th\'eor\`eme d'existence de Riemann.
 
Dans le cas particulier des quadriques affines mentionn\'e ci-dessus, ceci donne l'approximation forte
et le principe local-global pour les points entiers pour $n \geq 3$ (Corollaire \ref{troisvariables}, dont 
la d\'emonstration n'utilise que le th\'eor\`eme de Harder).

 On voit  par contre facilement que l'approximation forte  
  est en d\'efaut pour le groupe multiplicatif $\G_{m,K}$.

\section{L'approximation forte, le cas de l'espace affine}

Soit $T$ un sch\'ema de Dedekind int\`egre, de corps des fractions $K$.
Pour $P$ point ferm\'e de $T$ on note $R_{P}$ le compl\'et\'e de l'anneau local de $T$
en $P$
et $K_{P}$ le corps des fractions de $R_{P}$. On note ${\aA}_{K}$ l'anneau des ad\`eles de $K$. 
Pour $S \subset T$ un ensemble fini, on note ${\aA}^S_{K}$ l'anneau des ad\`eles
hors de $S$.
Soit $X$ une $K$-vari\'et\'e lisse. 
Pour tout point ferm\'e $P \in T$, on munit $X(K_{P})$ de la topologie d\'efinie
par celle de $K_{P}$.
On note $X(\aA_{K})$ l'espace topologique des points ad\`eliques de $X$,
et pour $S \subset T$ un ensemble fini, 
$X(\aA^S_{K})$ l'espace des points ad\'eliques hors de $S$.

Pour $S \subset T$ un ensemble fini,
on dit que la $K$-vari\'et\'e  $X$ satisfait l'approximation forte sur $T$ hors de $S$
si l'image diagonale de $X(K)$ est dense dans la projection sur $X(\aA^S_{K})$ de
$X(\aA_{K})$. En d'autres termes, si $X(\aA_{K})$ est non vide, alors $X(K$) est non vide
et son image diagonale dans $X(\aA^S_{K})$ est dense.
Si la propri\'et\'e vaut pour $S=\emptyset$, on dit  que $X$
satisfait l'approximation forte sur $T$.

Si la $K$-vari\'et\'e $X$ satisfait l'approximation forte hors de $S$,
alors elle satisfait l'approximation forte hors de tout ensemble fini 
$S'$ contenant $S$.

\begin{lem}\label{produit}
Soient $X$ et $Y$ deux $K$-vari\'et\'es lisses g\'eom\'etriquement  int\`egres.
si l'approximation forte hors d'un ensemble fini $S\subset T$  vaut pour  la $K$-vari\'et\'e $X$
et la $K$-vari\'et\'e $Y$, alors elle vaut  hors de $S \subset T$ pour la $K$-vari\'et\'e $X\times_{K}Y$.
\end{lem}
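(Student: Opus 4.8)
The plan is to prove the product statement directly from the definition of strong approximation, using the fact that for a product variety the adelic points factor as a product.

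First I would observe that for any Dedekind scheme $T$ and any two $K$-varieties $X$ and $Y$, one has a natural identification $(X\times_K Y)(\aA_K) = X(\aA_K)\times Y(\aA_K)$ as topological spaces, and likewise $(X\times_K Y)(\aA_K^S) = X(\aA_K^S)\times Y(\aA_K^S)$, simply because a point of the product over any ring is a pair of points, and the product topology on adelic points is compatible with this. The same holds for $K$-points: $(X\times_K Y)(K)=X(K)\times Y(K)$, with the diagonal embedding into the adeles being the product of the two diagonal embeddings. Geometric integrality of $X$ and $Y$ ensures $X\times_K Y$ is again geometrically integral, so that the hypothesis format is preserved (though this plays no essential role in the argument itself).

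Next I would handle the nonemptiness clause. If $(X\times_K Y)(\aA_K)$ is nonempty, then both $X(\aA_K)$ and $Y(\aA_K)$ are nonempty, so by strong approximation for $X$ and for $Y$ separately we get $X(K)\neq\emptyset$ and $Y(K)\neq\emptyset$, hence $(X\times_K Y)(K)\neq\emptyset$. Then for the density statement: the projection of $(X\times_K Y)(\aA_K)$ to $(X\times_K Y)(\aA_K^S)$ is, under the identifications above, the product of the projection of $X(\aA_K)$ to $X(\aA_K^S)$ with the corresponding projection for $Y$. The closure of a product $A\times B$ in a product of topological spaces is $\overline A\times\overline B$, so the closure of the image of $X(K)\times Y(K)$ equals (closure of image of $X(K)$) $\times$ (closure of image of $Y(K)$), which by hypothesis is all of (projection of $X(\aA_K)$) $\times$ (projection of $Y(\aA_K)$), i.e. all of the projection of $(X\times_K Y)(\aA_K)$. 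This gives strong approximation for $X\times_K Y$ outside $S$.

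There is no serious obstacle here; the only point requiring a little care is checking that the product topology on $(X\times_K Y)(\aA_K)$ really does coincide with the one coming from embedding $X\times_K Y$ in affine or projective space and taking the adelic topology place by place — but this is a standard and elementary verification, following from the fact that at each place $P$ the identification $(X\times_K Y)(K_P)=X(K_P)\times Y(K_P)$ is a homeomorphism for the $K_P$-analytic topologies, together with the compatibility of restricted products with finite products. I would state this compatibility as a remark and then present the two-line deduction above.
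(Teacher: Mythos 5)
Your proof is correct and follows essentially the same route as the paper: the paper's own argument is precisely the observation that the topology on $(X\times_{K}Y)(K_{P})$ admits a basis of product neighbourhoods, i.e.\ that $(X\times_{K}Y)(K_{P})=X(K_{P})\times Y(K_{P})$ as topological spaces, from which the adelic statement follows as you describe. You merely make explicit the adelic bookkeeping (restricted products commute with finite products, closure of a product is the product of the closures) that the paper leaves implicit.
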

\begin{proof}
Ceci r\'esulte simplement du fait suivant :
Pour $P$ point ferm\'e de $T$, tout voisinage ouvert d'un $K_{P}$-point $(M,N) \in (X\times_{K}Y)(K_{P})$
contient un voisinage ouvert  de $(M,N)$  produit d'un voisinage ouvert de $M \in X(K_{P})$ par
un voisinage ouvert de $N\in Y(K_{P})$. 
\end{proof}

On a aussi une version ``tordue'' du lemme pr\'ec\'edent.

 \begin{lem}\label{Weil}
Soit $T_{1}/T$ un rev\^{e}tement fini de sch\'emas de Dedekind int\`egres  tel que l'extension 
correspondante de corps des fractions $L/K$ soit s\'eparable. Soit $X$ une $L$-vari\'et\'e.
Soit $S \subset T$ un ensemble fini et $S_{1} \subset T_{1}$ son image r\'eciproque.
Si  une $L$-vari\'et\'e lisse g\'eom\'etriquement int\`egre $X$ satisfait l'approximation forte sur $T_{1}$
hors de $S_{1}$
alors la $K$-vari\'et\'e descendue \`a la Weil $R_{L/K}X$ satisfait l'approximation forte sur $T$
hors de $S$.
\end{lem}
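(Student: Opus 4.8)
The plan is to reduce strong approximation for $R_{L/K}X$ over $T$ to strong approximation for $X$ over $T_1$ by means of the canonical adjunction identity for Weil restriction of scalars. First I would recall the fundamental bijection: for every $K$-algebra $A$ one has a functorial identification $(R_{L/K}X)(A) = X(A\otimes_K L)$; in particular $(R_{L/K}X)(K) = X(L)$, and this is compatible with the $K$-variety structure, so $R_{L/K}X$ is again smooth and geometrically integral (geometric integrality because $L/K$ is separable, so $L\otimes_K\ovk$ is a product of copies of $\ovk$ indexed by the embeddings, and $R_{L/K}X$ becomes geometrically the corresponding product of copies of $X_{\ovk}$, which is integral since $X$ is geometrically integral over $L$).

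Next I would analyze the adelic points. For a closed point $P$ of $T$, the completed local ring $R_P$ has $R_P\otimes_{\mathcal{O}_T}\mathcal{O}_{T_1}=\prod_{Q\mid P}R_Q$, where $Q$ runs over the points of $T_1$ above $P$ (here I use that $T_1/T$ is a finite covering of Dedekind schemes); passing to fraction fields, $K_P\otimes_K L=\prod_{Q\mid P}L_Q$. Hence $(R_{L/K}X)(K_P)=\prod_{Q\mid P}X(L_Q)$, and this identification is a homeomorphism for the topologies coming from $K_P$ and from the $L_Q$'s. Taking the restricted product over all $P\in T$ (resp. over $P\notin S$), and using that the fibres $\{Q\mid P\}$ partition the closed points of $T_1$ and that $S_1$ is exactly the preimage of $S$, one gets a homeomorphism $(R_{L/K}X)(\aA_K)=X(\aA_L)$ identifying $(R_{L/K}X)(\aA_K^S)$ with $X(\aA_L^{S_1})$, and these identifications are compatible with the diagonal maps from $(R_{L/K}X)(K)=X(L)$.

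Finally, the conclusion is immediate: under these homeomorphisms the diagonal image of $(R_{L/K}X)(K)$ in $(R_{L/K}X)(\aA_K^S)$ corresponds to the diagonal image of $X(L)$ in $X(\aA_L^{S_1})$. If the latter is dense in the projection of $X(\aA_L)$ — which is the hypothesis, strong approximation for $X$ over $T_1$ outside $S_1$ — then the former is dense in the projection of $(R_{L/K}X)(\aA_K)$, which is exactly strong approximation for $R_{L/K}X$ over $T$ outside $S$. I do not expect any genuine obstacle here; the only point requiring a little care is the compatibility of topologies at each place (that $(R_{L/K}X)(K_P)\to\prod_{Q\mid P}X(L_Q)$ is a homeomorphism, not merely a bijection), which follows from the fact that Weil restriction along the finite flat $R_P\to\prod_{Q\mid P}R_Q$ is modelled by a finite set of coordinates and the product topology on a finite product coincides with the naive one, exactly as in the proof of Lemme \ref{produit}.
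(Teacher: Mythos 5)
Your proof is correct: the paper states Lemme \ref{Weil} without giving any proof, and your argument --- the adjunction $(R_{L/K}X)(A)=X(A\otimes_{K}L)$, the decomposition $K_{P}\otimes_{K}L=\prod_{Q\mid P}L_{Q}$ giving homeomorphisms $(R_{L/K}X)(K_{P})\simeq\prod_{Q\mid P}X(L_{Q})$, and the resulting identification of $(R_{L/K}X)(\aA_{K}^{S})$ with $X(\aA_{L}^{S_{1}})$ compatibly with the diagonal images of $(R_{L/K}X)(K)=X(L)$ --- is exactly the standard argument the paper leaves implicit. The only points needing care (topological compatibility at each place, and that $S_{1}$ is the full preimage of $S$ so the restricted products match) are precisely the ones you address, so nothing is missing.
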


\section{L'approximation forte pour les groupes semi-simples et leurs espaces homog\`enes}

On a le th\'eor\`eme d'approximation forte g\'en\'eral suivant, d\^{u} \`a P. Gille  \cite[Corollaire 5.11]{G}.

\begin{theo}    \label{generaldedekind}  Soient $A$ un anneau de Dedekind, $K$ son corps des fractions
et $G$ un $K$-groupe semi-simple simplement connexe absolument  presque $K$-simple 
$K$-isotrope. Supposons la $K$-vari\'et\'e $G$ $K$-rationnelle, ou plus g\'en\'eralement  r\'etracte rationnelle.
Alors l'approximation forte  sur $\Spec A$ vaut pour $G$.
\end{theo}

 En utilisant le lemme \ref{Weil}, on voit que la conclusion vaut
  aussi pour toute restriction \`a la Weil d'un tel groupe d\'efini sur une
 extension finie de $K$, et ensuite pour tout
 produit sur $K$ de tels groupes.
 
Tout  $K$-groupe semi-simple simplement connexe quasi-d\'eploy\'e est produit
de groupes du type ci-dessus : le  tore maximal d'un sous-groupe de Borel
 d'un $K$-groupe semi-simple simplement connexe  quasi-d\'eploy\'e est un $K$-tore quasi-trivial,
 et tout groupe  simple quasi-d\'eploy\'e est  isotrope \cite[Thm. 1.1]{T}).
 Le th\'eor\`eme \ref{generaldedekind} a donc comme cons\'equence 
l'\'enonc\'e suivant, d\^{u}  \`a Harder \cite[Satz 2.2.1]{H}, et qui suffit aux besoins du pr\'esent article.

\begin{theo}\label{harder}  Soient $A$ un anneau de Dedekind, $K$ son corps des fractions
et $G$ un $K$-groupe semi-simple simplement connexe quasi-d\'eploy\'e.
Alors l'approximation forte  sur $\Spec A$ vaut pour $G$.
\end{theo}

 \begin{cor}\label{simplementconnexe}
  Soit $\Gamma$ une courbe connexe, projective et lisse sur $\C$,  soit $K=\C(\Gamma)$.
 L'approximation forte hors de tout ensemble fini non vide $S \subset \Gamma(\C)$
 vaut pour tout $K$-groupe semi-simple simplement connexe $G$.
 \end{cor}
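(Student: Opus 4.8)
The plan is to deduce Corollaire~\ref{simplementconnexe} from Th\'eor\`eme~\ref{harder} by a specialization-of-coefficients argument. Let $G$ be a semi-simple simply connected $K$-group, $K = \C(\Gamma)$. First I would spread out: over a suitable nonempty open $U \subset \Gamma$ there is a smooth group scheme $\mathcal{G}/U$ with generic fibre $G$, and we may take $U$ as small as we like, in particular disjoint from the chosen finite set $S$ (after possibly enlarging $S$ to contain $\Gamma(\C) \setminus U$, which only makes strong approximation easier by the remark just before Lemme~\ref{produit}). So it suffices to prove strong approximation for $G$ on $U$, i.e. with respect to the Dedekind ring $A = \C[U]$.

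The key point is that over $K = \C(\Gamma)$, every semi-simple simply connected group is quasi-split. This is the classical fact that $\mathrm{cd}(K) \leq 1$ (Tsen: $\C(\Gamma)$ is a $C_1$-field, hence of cohomological dimension $1$), together with Steinberg's theorem that over a field of cohomological dimension $\leq 1$ the Galois cohomology $H^1(K, G^{\mathrm{ad}})$ of the adjoint group vanishes, so the unique inner form of the quasi-split form that $G$ can be is the quasi-split form itself; equivalently, $G$ has a Borel subgroup defined over $K$. Therefore $G$ is a $K$-group semi-simple simply connected quasi-d\'eploy\'e, and Th\'eor\`eme~\ref{harder} applies verbatim with $A = \C[U]$: strong approximation holds for $G$ on $\Spec A = U$.

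Finally I would assemble the statement. Strong approximation for $G$ on $U$ means the diagonal image of $G(K)$ is dense in the projection to $G(\aA_K^{S})$ of $G(\aA_K)$, where now the adeles are those of the Dedekind ring $\C[U]$, i.e. restricted products over the closed points of $U$; since $S \supseteq \Gamma(\C) \setminus U$, this is exactly strong approximation on $\Gamma$ away from $S$ in the sense of \S2, because the places in $\Gamma(\C) \setminus U$ already lie in $S$ and impose no condition. Hence strong approximation holds hors de $S$ for $G$, as claimed.

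The main obstacle — really the only nontrivial input beyond Harder's theorem — is the passage from an \emph{arbitrary} simply connected semi-simple $K$-group to a \emph{quasi-split} one. This is where one must invoke $\mathrm{cd}(\C(\Gamma)) \leq 1$ and the consequence (Steinberg, via the theory of forms of reductive groups) that such a $K$-group is automatically quasi-split; everything else is bookkeeping with the definition of strong approximation on a Dedekind scheme and the elementary remark that enlarging the exceptional set $S$ is harmless.
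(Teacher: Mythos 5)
Your core argument is exactly the paper's: since $K=\C(\Gamma)$ is $C_1$ (Tsen), hence of dimension cohomologique $\leq 1$, Steinberg/Springer force $G$ to be quasi-d\'eploy\'e, and then Th\'eor\`eme \ref{harder} applied to the Dedekind ring of the affine curve obtained by removing the places of $S$ gives the result. However, the spreading-out step you insert is both unnecessary and mishandled. Th\'eor\`eme \ref{harder}, as stated, concerns the $K$-group $G$ itself over an arbitrary Dedekind ring $A$ with fraction field $K$; no smooth model $\mathcal{G}/U$ is required, so you can simply take $A=\C[\Gamma\setminus S]$ (affine precisely because $S\neq\emptyset$), which is what the paper does. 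More importantly, your parenthetical ``after possibly enlarging $S$ to contain $\Gamma(\C)\setminus U$, which only makes strong approximation easier'' inverts the logic of the remark before Lemme \ref{produit}: that remark says approximation forte hors de $S$ implies approximation forte hors de tout $S'\supseteq S$, not that it suffices to prove the statement for a larger exceptional set. If the model only existed over some $U\subsetneq \Gamma\setminus S$, your argument would establish approximation forte hors de $\Gamma(\C)\setminus U$, which is strictly weaker than the assertion for the given $S$; your final paragraph papers over this by assuming $S\supseteq\Gamma(\C)\setminus U$, i.e.\ $U=\Gamma\setminus S$. Deleting the spreading-out step entirely removes the problem, and the remaining proof coincides with the one in the paper.
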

 
 \begin{proof}
Le corps  $K=\C(\Gamma)$ \'etant un corps $C_{1}$ (Tsen),   tout $K$-groupe $G$ est quasi-d\'eploy\'e. 
(Springer \cite[Prop. 1.1, Prop. 1.2]{Sp}; voir \cite[III.2.3]{S} pour le cas g\'en\'eral de la conjecture I
de Serre, \'etabli par Steinberg).
 Le th\'eor\`eme \ref{harder} s'applique donc
 \`a l'ouvert affine $\Gamma \setminus  S$, qui est le spectre d'un anneau de Dedekind,
 et \'etablit le r\'esultat pour $G$.
\end{proof}

 \begin{prop}\label{PoitouTate}
 (i) Soit $F=\C((t))$. Pour tout module galoisien fini $\mu$ sur $F$, de dual le module galoisien fini  $\hat{\mu}= \Hom_{\Z}({\mu}, \Q/\Z(1))$,
 l'accouplement \'evident
 $$H^1(F,\mu) \times \hat{\mu}(F) \to H^1(F,\Q/\Z(1))=\Q/\Z$$
 est une dualit\'e parfaite de groupes finis.
 
 (ii)
 Soit $\Gamma$ une courbe connexe, projective et lisse sur $\C$,  et soit $K=\C(\Gamma)$.
 Pour tout module galoisien fini $\mu$ sur $F$, de dual $\hat{\mu}= \Hom_{\Z}({\mu}, \Q/\Z(1))$,
 on a une suite exacte naturelle
 $$ H^1(K,\mu) \to \bigoplus_{P \in \Gamma(\C)} H^1(K_{P},\mu) \to \Hom(\hat{\mu}(K), \Q/\Z) \to 0,$$
chaque application  $H^1(K_{P},\mu) \to \Hom(\hat{\mu}(K), \Q/\Z)$ \'etant donn\'ee par la fl\`eche compos\'ee
 $H^1(K_{P},\mu) \to \Hom(\hat{\mu}(K_{P}), \Q/\Z) \to \Hom(\hat{\mu}(K), \Q/\Z)$ et \'etant en particulier
 surjective.
 
 (iii) Soit $S \subset \Gamma(\C)$  un ensemble non vide, et soit $\mu$ comme ci-dessus.
 L'application diagonale  $$ H^1(K,\mu) \to \bigoplus_{P \notin S} H^1(K_{P},\mu)$$
 est surjective.
 
 \end{prop}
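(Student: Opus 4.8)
Part (i) is a local duality for the field $F=\C((t))$ (whose residue field is algebraically closed); part (ii) is the analogue, for the curve $\Gamma/\C$, of the Poitou--Tate exact sequence, and I would prove it by étale-sheaf methods; part (iii) is a formal consequence of (ii).

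\textbf{(i).} The absolute Galois group of $F=\C((t))$ is topologically $\widehat{\Z}$, so $\cd F=1$, the groups $H^i(F,\mu)$ are finite and vanish for $i\geq 2$, and Kummer theory gives $H^1(F,\Q/\Z(1))=\colim_n F^*/(F^*)^n\iso\Q/\Z$ via the valuation of $t$, which identifies the target of the pairing. Since $\C\subset F$, the module $\Q/\Z(1)=\mu_\infty$ has trivial Galois action; hence, on the one hand, $H^1(F,\mu)$ is the module of coinvariants $\mu/(\sigma-1)\mu$ for a topological generator $\sigma$ of $G_F$, and on the other hand $\hat{\mu}(F)=\Hom_{G_F}(\mu,\Q/\Z)=\Hom(\mu/(\sigma-1)\mu,\Q/\Z)$. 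In particular the two groups are finite of the same order. I would then check that the cup-product pairing of the statement becomes, under these identifications, the evident evaluation pairing $(\mu/(\sigma-1)\mu)\times\Hom(\mu/(\sigma-1)\mu,\Q/\Z)\to\Q/\Z$, which is perfect: this is an explicit cocycle computation for $\mu=\Z/n$ and $\mu=\mu_n$, and the general case follows by functoriality (or a d\'evissage along a composition series of $\mu$, both sides being right-exact in $\mu$).

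\textbf{(ii).} I would realize $\mu$ as a locally constant constructible sheaf $\mathcal F$ (with $\Z/n$-dual $\widehat{\mathcal F}$) on a dense open $j\colon U\hookrightarrow\Gamma$, and use the localization (Gysin) sequence relating $H^*(\Gamma,j_*\mathcal F)$, $H^*(U,\mathcal F)$ and the $H^*_P(\mathcal O^h_{\Gamma,P},j_*\mathcal F)$ for $P\in\Gamma\setminus U$. Since the residue field $\C$ is algebraically closed, one has $H^0(\mathcal O^h_{\Gamma,P},j_*\mathcal F)\iso H^0(K_P,\mu)$ and $H^n(\mathcal O^h_{\Gamma,P},j_*\mathcal F)=0$ for $n\geq 1$, whence $H^n_P(\mathcal O^h_{\Gamma,P},j_*\mathcal F)=H^{n-1}(K_P,\mu)$, concentrated in degree $n=2$; combined with Artin's vanishing $H^2(U,\mathcal F)=0$ (affine curve over an algebraically closed field) this yields the exact sequence
$$0\to H^1(\Gamma,j_*\mathcal F)\to H^1(U,\mathcal F)\to\bigoplus_{P\in\Gamma\setminus U}H^1(K_P,\mu)\to H^2(\Gamma,j_*\mathcal F)\to 0.$$
Because $\Gamma\setminus U$ is zero-dimensional, $H^2(\Gamma,j_!\mathcal F)\iso H^2(\Gamma,j_*\mathcal F)$, and Poincar\'e duality on the smooth proper curve $\Gamma$ --- whose essential input is $H^2(\Gamma,\mu_n)=\Pic(\Gamma)/n\cong\Z/n$, a consequence of the divisibility of $\Pic^0(\Gamma)=\mathrm{Jac}(\Gamma)(\C)$ --- gives $H^2(\Gamma,j_*\mathcal F)\iso\Hom(H^0(U,\widehat{\mathcal F}),\Q/\Z)$. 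Passing to the colimit over $U$, where the systems stabilize since $H^1(K,\mu)$ and $\hat{\mu}(K)$ are finite ($\colim_U H^1(U,\mathcal F)=H^1(K,\mu)$ and $\colim_U H^0(U,\widehat{\mathcal F})=\hat{\mu}(K)$), yields
$$0\to A\to H^1(K,\mu)\to\bigoplus_{P\in\Gamma(\C)}H^1(K_P,\mu)\to\Hom(\hat{\mu}(K),\Q/\Z)\to 0,$$
hence the assertion of (ii) upon forgetting the first term $A=\colim_U H^1(\Gamma,j_*\mathcal F)$. It then remains to identify each local map $H^1(K_P,\mu)\to\Hom(\hat{\mu}(K),\Q/\Z)$ with the stated composite, which follows from the compatibility of Poincar\'e duality on $\Gamma$ with the local duality of (i) (for $\mu_n$ this amounts to: the local invariant at $P$ of a class of $H^1(K,\Q/\Z(1))=\colim_n K^*/(K^*)^n$ is its valuation, and the sum of these invariants is the degree of a principal divisor, hence zero). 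Finally each such map is surjective: by (i) it factors as $H^1(K_P,\mu)\iso\Hom(\hat{\mu}(K_P),\Q/\Z)\to\Hom(\hat{\mu}(K),\Q/\Z)$, the last arrow being surjective as the dual of the inclusion $\hat{\mu}(K)=\hat{\mu}^{G_K}\hookrightarrow\hat{\mu}^{G_{K_P}}=\hat{\mu}(K_P)$.

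\textbf{(iii).} This is formal. Given $(\alpha_P)_{P\notin S}$ with finite support in $\bigoplus_{P\notin S}H^1(K_P,\mu)$, view it in $\bigoplus_{P\in\Gamma(\C)}H^1(K_P,\mu)$ by setting $\alpha_P=0$ for $P\in S$, and let $\psi$ be its image in $\Hom(\hat{\mu}(K),\Q/\Z)$. Since $S\neq\emptyset$, pick $Q\in S$; by surjectivity of $H^1(K_Q,\mu)\to\Hom(\hat{\mu}(K),\Q/\Z)$ choose $\beta\in H^1(K_Q,\mu)$ with image $-\psi$. Then $(\alpha_P)_{P\notin S}+\beta$, with $\beta$ placed at $Q$, lies in the kernel of the map to $\Hom(\hat{\mu}(K),\Q/\Z)$, hence, by exactness in (ii), in the image of the diagonal map $H^1(K,\mu)\to\bigoplus_{P\in\Gamma(\C)}H^1(K_P,\mu)$; any preimage $\alpha\in H^1(K,\mu)$ satisfies $\mathrm{loc}_P(\alpha)=\alpha_P$ for all $P\notin S$, which is the required surjectivity.

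\textbf{Main obstacle.} The substance is in (ii), specifically the coherent matching of global Poincar\'e duality on $\Gamma$ with the local duality (i): identifying the cokernel with $\Hom(\hat{\mu}(K),\Q/\Z)$ and checking that each local map is the composite described. Parts (i) and (iii) are comparatively soft.
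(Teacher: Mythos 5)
Your proposal is correct, and in substance it carries out precisely what the paper delegates to the literature. For (i) the paper argues exactly as you do: the absolute Galois group of $F=\C((t))$ is $\widehat{\Z}$ (Puiseux), so $H^1(F,\mu)=\mu/(1-\sigma)\mu$, and the pairing is the evaluation pairing of this finite group against its Pontryagin dual $\hat{\mu}(F)$; your cocycle check of this last identification is the only detail the paper does not even mention. For (ii), the paper proves only the case $\mu=\mu_n$ directly (Kummer sequence plus divisibility of $\mathrm{Jac}(\Gamma)(\C)$), and for general $\mu$ invokes the case $d=-1$ of Izquierdo's duality theorem \cite{Izq}, explicitly leaving to the reader the adaptation of the arguments of \cite{CTH} and \cite{HSS}; your argument --- local cohomology at $P$ concentrated in degree $2$ and equal to $H^1(K_P,\mu)$ because the residue field is algebraically closed, Artin vanishing $H^2(U,\mathcal F)=0$, Poincar\'e duality on $\Gamma$ whose key input $H^2(\Gamma,\mu_n)\cong \Z/n$ is again the divisibility of the Jacobian, then passage to the colimit over $U$ --- is exactly that adaptation, so you supply a proof where the paper gives a citation. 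The points that remain routine-but-not-free are the ones you flag yourself: the identification of the transition maps in the colimit (so that the middle term is really $\bigoplus_{P}H^1(K_P,\mu)$ with the diagonal map being local restriction) and the compatibility of the global duality isomorphism with the local duality of (i), i.e. that each local component of the last arrow is the stated composite; these checks are carried out in the cited references and pose no problem in this simpler setting. Your (iii) is the same formal deduction the paper intends (absorb the defect at a chosen point of $S$ using the surjectivity of the local map there, then use exactness in (ii)), spelled out correctly.
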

  
\begin{proof}  L'\'enonc\'e (i) r\'esulte simplement du fait que le groupe de Galois absolu
de $\C((t))$ est $\hat{\Z}$ (Puiseux),  engendr\'e topologiquement par un \'el\'ement $\sigma$, 
donc que $H^1(F,\mu) = \mu/(1-\sigma)\mu$.

Pour   $\mu=\mu_{n} $  le groupe constant d\'efini par les racines $n$-i\`emes de l'unit\'e, l'\'enonc\'e (ii)
r\'esulte,  via la suite de Kummer, du  fait que le groupe des $\C$-points de la jacobienne de $\Gamma$
est divisible.

Pour $\mu$ quelconque, l'\'enonc\'e (ii) 
est  un th\'eor\`eme de type Poitou-Tate qui ne semble pas avoir \'et\'e explicit\'e dans la litt\'erature classique.
C'est maintenant  le cas le plus simple  ($d=-1$) d'un th\'eor\`eme d'Izquierdo  sur les courbes sur les corps
$d$-locaux (Izquierdo  \cite[Thm. 2.7]{Izq}), \'etendant des r\'esultats  sur les cas $d=0 $ \cite{CTH} et $d=1$   \cite[\S 2]{HSS}.
Nous laissons au lecteur le soin d'adapter les arguments de ces articles au cas plus simple ici consid\'er\'e.

L'\'enonc\'e (iii) r\'esulte de l'\'enonc\'e (ii).
\end{proof}

L'\'enonc\'e (iii) est un cas particulier de l'\'enonc\'e suivant, qui seul sera utilis\'e pour la d\'emonstration
du th\'eor\`eme \ref{appfortesemi-simple}.

 \begin{theo}\label{surjH1}
 Soit   $\Gamma$ une courbe  connexe, projective et lisse sur $\C$, et soit $K=\C(\Gamma)$.
 Soit  $S \subset \Gamma(\C)$ fini  non vide.
 Pour tout $K$-groupe lin\'eaire $G$, l'application diagonale
 $$ H^1(K,G) \to \oplus_{P \notin S} H^1(K_{P},G)$$
 est surjective. 
  \end{theo}
  \begin{proof}
 La somme directe d\'esigne ici le sous-ensemble du produit des ensembles point\'es $H^1(K_{P},G)$
  form\'e des \'el\'ements $\{\xi_{P}\}$
avec $\xi_{P}=1$ pour presque tout point $P$.
Soit $\{\xi_{P}\} \in  \oplus_{P \notin S} H^1(K_{P},G)$.
  Il suffit de suivre la d\'emonstration du th\'eor\`eme 4.2 de \cite{CTG}.
On commence par se r\'eduire au cas o\`u $G$ est un $K$-groupe fini.
On peut supposer que $S$ est r\'eduit \`a un seul point $P_{0}$.
On choisit ensuite un ensemble fini $T \subset \Gamma(\C)$, contenant $P_{0}$,
de compl\'ementaire $U \subset \Gamma$
 tel que  $G/K$ s'\'etende en un $U$-sch\'ema en groupes fini \'etale $ \mathcal{G}$ sur $U$,
 et que $\xi_{P}=1$ pour $P\notin U$.
  On consid\`ere ensuite l'extension $L/K$  maximale non ramifi\'ee en dehors
 de $T$. 
 La d\'emonstration de \cite[Thm. 4.2]{CTG}, qui repose sur le th\'eor\`eme d'existence de Riemann,
   produit $\eta  \in H^1(L/K,G(L)) \subset H^1(K,G)$ d'image
 $\xi_{P}$ pour $P\in T \setminus \{P_{0}\}$. Pour $P \notin T$, l'image de $\eta$
 est  $1 \in  H^1(K_{P},G)$, puisque $\eta$ est dans l'image de $H^1_{\et}(U, \mathcal{G})$.
   \end{proof}

\begin{theo} \label{appfortesemi-simple}
Soit $\Gamma$ une courbe  connexe, projective et lisse sur $\C$, et soit $K=\C(\Gamma)$.
L'approximation forte hors de tout ensemble fini non vide $S \subset \Gamma(\C)$
 vaut pour tout espace homog\`ene d'un $K$-groupe semi-simple $G$.
\end{theo}

\begin{proof} Soit $X$ un tel espace homog\`ene. 
Comme on n'a fait aucune hypoth\`ese
sur les stabilisateurs g\'eom\'etriques, on peut supposer $G$ simplement connexe.
On sait que l'on a $X(K)\neq \emptyset$. On a donc $X=G/H$ pour $H \subset G$
un $K$-sous-groupe ferm\'e. Comme $G$ est lin\'eaire connexe, les ensembles $H^1(K,G)$ et $H^1(K_{P},G)$ sont r\'eduits
chacun \`a un point. On a des suites exactes d'ensembles point\'es (\cite[I.5.4, Prop. 36  et Cor. 1]{S}) :

 $$
\begin{matrix}
G(K)                &    \hfl{}{}{4mm}  &                    X(K)                          &   \hfl{}{}{4mm}   & H^1(K,H)                                                                 &\hfl{}{}{4mm} &1 \cr
\vfl{}{}{4mm} &              &            \vfl{}{}{4mm}                 &            &       \vfl{}{}{4mm}                                                       &  &                                                       \cr
G(\aA_{K}^{P_{0}}) &  \hfl{}{}{4mm}   &         X(\aA_{K}^{P_{0}})        &\hfl{}{}{4mm}     & \oplus_{P \neq P_{0}}H^1(K_{P},H)                   &\hfl{}{}{4mm} &  1 . \cr
\end{matrix}
 $$
Le  th\'eor\`eme \ref{surjH1}, une chasse au diagramme et le corollaire  \ref{simplementconnexe} (approximation forte pour $G$
semi-simple simplement connexe)
donnent alors l'approximation forte pour $X$ hors de la place $P_{0}$.
\end{proof}

\begin{rema}\label{ab}
 Le th\'eor\`eme \ref{simplementconnexe} et  la proposition \ref{PoitouTate} suffisent  \`a \'etablir 
  le th\'eor\`eme \ref{appfortesemi-simple} pour les espaces homog\`enes $X=G/H$ dans les
  cas suivants :

   (i) $X=G$ avec $G$ semi-simple, et plus g\'en\'eralement :
  
  (ii) $X=G/A$ avec $G$ semi-simple simplement connexe et $A$ sous-$K$-groupe ab\'elien fini.
  
  (iii) $G$ semi-simple et $H$ connexe. 
\end{rema}

  \begin{rema}
 Sur un corps de nombres, un groupe non simplement connexe ne saurait satisfaire l'approximation forte hors d'un nombre fini de places
 (Kneser \cite{K}).  Plus g\'en\'eralement, Minchev \cite{M} 
    a montr\'e que si une vari\'et\'e normale sur un corps de nombres satisfait l'approximation forte
 hors d'un nombre fini de places, alors elle est g\'eom\'etriquement simplement connexe.
 La situation sur $K=\C(\Gamma)$ 
 est diff\'erente, comme l'ont d\'ej\`a remarqu\'e  Chen et Zhu \cite[Cor. 1.5 et  \S 5]{ChZh}.
 \end{rema}

Pour $n\geq 4$, l'\'enonc\'e suivant  est un cas particulier
d'un r\'esultat de Chen et Zhu  \cite[Thm. 1.4]{ChZh}.

\begin{cor}\label{troisvariables}
Soit $\Gamma$ une courbe  connexe, projective et lisse sur $\C$, soit $K=\C(\Gamma)$ et soit $S \subset \Gamma(\C)$ fini non vide.
  Soient $n\geq 3$ et $q(x_{1},\dots,x_{n})$ une forme quadratique non d\'eg\'en\'er\'ee
sur $K$. Soit $b \in K^*$. L'approximation forte hors de $S$ vaut pour la quadrique affine 
d'\'equation $q(x_{1},\dots,x_{n})=b.$
\end{cor}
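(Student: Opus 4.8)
The plan is to reduce the statement about the affine quadric $\{q(x_1,\dots,x_n)=b\}$ to a homogeneous-space statement so that either Théorème \ref{appfortesemi-simple} (for $n\geq 4$, where the appropriate spin group is simply connected) or a more hands-on argument based on Corollaire \ref{simplementconnexe} alone (for $n=3$, following the indication in the introduction that this corollary's proof uses only Harder's theorem) applies. First I would observe that since $K=\C(\Gamma)$ is a $C_1$ field (Tsen), the quadratic form $q$ of rank $n\geq 3$ represents every element of $K^*$; in particular $b$ is represented, so the affine quadric $X:\ q=b$ has a $K$-point, and it also has a $K_P$-point at every place $P$ since each $K_P$ is also $C_1$ (being the fraction field of $\C[[t]]$). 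So the adelic set is nonempty and the only real content is density of $X(K)$ in the projection of the adelic points away from $S$.

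Next, for $n\geq 3$ the smooth affine quadric $X$ is a homogeneous space: the special orthogonal group $G=\mathrm{SO}(q')$ of the rank-$(n+1)$ form $q'=q\perp\langle -b\rangle$ (or more simply, $\mathrm{O}(q)$ acting, but one wants a connected group) acts transitively on $X$, and one may realize $X=G/H$ with $G$ semisimple when $n\geq 3$, namely $G=\mathrm{Spin}(q')$ or the relevant simply connected cover. For $n\geq 4$ this is exactly an instance covered by Théorème \ref{appfortesemi-simple}: $X$ is a homogeneous space of a semisimple (indeed simply connected) $K$-group, with geometric stabilizer a spin group of the rank-$(n-1)$ form obtained by restriction, and the theorem gives strong approximation off $S$ with no hypothesis on the stabilizers. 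For the remaining case $n=3$ the stabilizer is a one-dimensional group; here I would instead argue directly. The affine quadric of rank $3$ is, after the change of variables diagonalizing $q$, isomorphic over $K$ (since $q$ represents $b$) to a quadric of the shape $x^2-yz=c$ type, equivalently $X$ is a torsor under a two-dimensional group, but the cleanest route is to use that $\mathrm{Spin}(q')\cong\mathrm{SL}_2$ (an isomorphism valid over any field for a rank-$4$ split-by-Tsen form, since $q'$ splits as much as possible over the $C_1$ field $K$ and more crucially over each $K_P$), acting on $X$ with stabilizer a torus; then the exact sequence of pointed sets
$$
G(K)\to X(K)\to H^1(K,H)
$$
together with its local analogues reduces the approximation statement for $X$ to surjectivity of $H^1(K,H)\to\oplus_{P\notin S}H^1(K_P,H)$ and strong approximation for $G=\mathrm{SL}_2$ off $S$. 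The latter is Corollaire \ref{simplementconnexe}; the former, for $H$ the relevant (possibly nonconstant, but abelian) finite-type group scheme, is handled by Théorème \ref{surjH1} if $H$ is finite, and by Proposition \ref{PoitouTate}(iii) after reduction, or directly by a Hensel/weak-approximation argument since $H^1$ of a torus over the $C_1$ field behaves well.

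The main obstacle I anticipate is the bookkeeping for $n=3$: one must check that $\mathrm{SO}(q')$ or its simply connected cover really is semisimple and acts on the affine quadric with the geometric stabilizer being a group to which the earlier machinery applies, and one must be careful that a rank-$3$ form over $K$ need not be isotropic even though $K$ is $C_1$ (Tsen gives isotropy only from rank $\geq n+1=3$ wait — actually rank $\geq 2$ suffices for representing by $C_1$ but isotropy needs rank $\geq 3$, so $q'$ of rank $4$ is isotropic, in fact hyperbolic plus a line, hence $\mathrm{Spin}(q')$ is an inner form that is actually split or close to it). Concretely: over $K$ every rank-$\geq 3$ form is isotropic, so $q'$ of rank $4$ is hyperbolic, $\mathrm{Spin}(q')\cong\mathrm{SL}_2\times\mathrm{SL}_2$ up to a central identification — and one must track which factor(s) act — and similarly at each $K_P$. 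Once the group is identified as a product of copies of $\mathrm{SL}_2$ (simply connected, $K$-rational, $K$-isotropic) the input hypotheses of Théorème \ref{harder} / Corollaire \ref{simplementconnexe} are met, and the diagram chase in the proof of Théorème \ref{appfortesemi-simple} goes through verbatim; so the only genuinely new work is the linear-algebra identification of the acting group and its stabilizer in the low-rank case. For $n\geq 4$ there is essentially nothing to do beyond citing Théorème \ref{appfortesemi-simple}.
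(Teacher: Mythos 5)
Your overall strategy --- realize the affine quadric as a homogeneous space of a spin group and invoke Th\'eor\`eme \ref{appfortesemi-simple} (whose proof, for connected stabilizers, reduces to Harder's theorem) --- is the paper's strategy, but the group you make act is the wrong one, and this is where the written argument breaks down. The group $\mathrm{SO}(q')$ (or $\mathrm{Spin}(q')$) of the rank-$(n+1)$ form $q'=q\perp\langle -b\rangle$ does \emph{not} act on the affine quadric $X:\ q=b$: it acts on the projective quadric $q'=0$ in $\P^{n}$, but it does not preserve the hyperplane at infinity, so it induces no action at all on the affine piece, let alone a transitive one. The correct setup (the one used in the paper, cf.\ \cite[\S 5]{CTX}) is that $\mathrm{SO}(q)$ itself acts on $X$, transitively by Witt's theorem; after the choice of a $K$-point $v$ (which exists by Tsen), $X\simeq \mathrm{SO}(q)/\mathrm{SO}(q'')$ with $q''=q|_{v^{\perp}}$ of rank $n-1$, and lifting to the simply connected cover one gets $X\simeq \mathrm{Spin}(q)/H$, where $H$, the preimage of $\mathrm{SO}(q'')$, is connected: a one-dimensional torus for $n=3$, and $\mathrm{Spin}(q'')$, semi-simple simply connexe, for $n\geq 4$. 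Your $n=3$ discussion inherits the initial error: $\mathrm{Spin}(q')$ for $q'$ of rank $4$ is six-dimensional (it is not $\mathrm{SL}_2$, and it is $\mathrm{SL}_2\times\mathrm{SL}_2$ only when $q'$ is hyperbolic, which is not automatic over $\C(\Gamma)$, since rank-$4$ forms there may have nontrivial discriminant), and a six-dimensional group acting transitively on a two-dimensional quadric cannot have a torus as stabilizer.

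Once the homogeneous space structure under $\mathrm{Spin}(q)$ is in place, no separate treatment of $n=3$ is needed: Th\'eor\`eme \ref{appfortesemi-simple} makes no hypothesis on the stabilizers, so it covers $n=3$ and $n\geq 4$ uniformly. Better, since $H$ is connected and $K$, $K_P$ are perfect of cohomological dimension $\leq 1$, Steinberg's theorem gives $H^1(K,H)=1$ and $H^1(K_P,H)=1$ for all $P$; the exact sequences of pointed sets then identify $X(K_P)$ with a single $G(K_P)$-orbit, and the corollary follows directly from strong approximation for the simply connected group $\mathrm{Spin}(q)$ (Corollaire \ref{simplementconnexe}, i.e.\ Harder), without any appeal to Th\'eor\`eme \ref{surjH1} or to Proposition \ref{PoitouTate} --- which is exactly the point made at the end of the paper's proof. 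So the missing (and only) piece of your argument is the correct identification of the acting group and of its connected stabilizer; the surrounding reductions you propose are otherwise sound.
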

 \begin{proof}
Comme il est bien connu  (cf. \cite[\S 5]{CTX}),
 la quadrique affine sur $K=\C(t)$ donn\'ee par l'\'equation
 $q(x_{1},\dots,x_{n})=b$
pour $n \geq 2$ poss\`ede un $K$-point et  pour $n\geq 3$  peut s'\'ecrire, apr\`es choix d'un $K$-point, comme
 quotient du groupe des spineurs de la forme quadratique $q$
 par un $K$-groupe lin\'eaire connexe (tore pour $n=3$, groupe semi-simple simplement connexe pour $n \geq 4$).
 L'\'enonc\'e est alors un cas particulier du th\'eor\`eme  
 \ref{appfortesemi-simple} -- pour $G$ semi-simple simplement connexe et $H$ connexe, si bien que
 le r\'esultat ici est une cons\'equence directe du th\'eor\`eme de Harder. 
   \end{proof}

\begin{cor}\label{quadaff} Soient $n \geq 3$ et   $b_{1}(t), \dots, b_{n}(t) \in \C[t]$ des \'el\'ements non nuls.
 Pour $b(t)\in \C[t]$,  l'\'equation
 $$ \sum_{i=1}^n a_{i}(t) x_{i}(t)^2=b(t)$$
 a des solutions avec $x_{i}(t) \in \C[t]$ si et seulement si elle a des solutions
 modulo toute puissance de $\prod_{i=1}^na_{i}(t)$. 
 \end{cor}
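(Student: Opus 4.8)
L'id\'ee est de d\'eduire l'\'enonc\'e du corollaire \ref{troisvariables} en traduisant la condition de congruences en termes ad\'eliques. On prend $\Gamma=\bP^1_\C$, $K=\C(t)$ et $S=\{\infty\}$, de sorte que $\Gamma\setminus S=\Spec\C[t]$ et que les places $P\neq\infty$ sont les id\'eaux maximaux $(t-c)$ de $\C[t]$, avec $R_P=\C[[t-c]]$ et $K_P=\C((t-c))$. Le cas $b=0$ \'etant trivial (prendre $x_i=0$), on suppose $b\neq 0$, et on note $X\subset\aA^n_K$ la quadrique affine lisse $\sum_{i=1}^n a_i(t)x_i^2=b(t)$, munie de son mod\`ele \'evident $\mathcal X\subset\aA^n_{\C[t]}$ d\'efini par la m\^eme \'equation. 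Comme $\C[t]=K\cap\bigcap_{P\neq\infty}R_P$, une solution dans $\C[t]$ n'est rien d'autre qu'un $K$-point de $X$ entier en toute place $P\neq\infty$ ; il suffit donc d'\'etablir que $\mathcal X(\C[t])\neq\emptyset$ d\`es que $\mathcal X(R_P)\neq\emptyset$ pour tout $P\neq\infty$, la r\'eciproque \'etant imm\'ediate.

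On analyse ensuite les deux volets de cette condition locale. Pour $P=(t-c)$ ne divisant pas $\prod_i a_i(t)$, tous les $a_i$ sont des unit\'es de $R_P$ et la fibre sp\'eciale de $\mathcal X$ en $P$ est une quadrique affine de rang $n\geq 3$ sur $\C$ ; elle admet un point lisse \`a coordonn\'ees dans $\C$ (si $b(c)=0$, on prend un point hors du sommet du c\^one, existant car la quadrique projective associ\'ee est de dimension $n-2\geq 1$), en lequel $\mathcal X\to\Spec R_P$ est lisse, et le lemme de Hensel le rel\`eve en un point de $\mathcal X(R_P)$. Pour $P$ divisant $\prod_i a_i(t)$, la condition $\mathcal X(R_P)\neq\emptyset$ est pr\'ecis\'ement ce qu'exprime l'hypoth\`ese : par le th\'eor\`eme chinois, $\C[t]/(\prod_i a_i)^m$ est le produit des $\C[t]/(t-c)^{m e_c}$ sur les racines $c$ de $\prod_i a_i$, de sorte que la solubilit\'e modulo toute puissance de $\prod_i a_i$ \'equivaut \`a la solubilit\'e modulo $(t-c)^N$ pour tout $N$ en chacune de ces places, elle-m\^eme \'equivalente \`a $\mathcal X(R_P)\neq\emptyset$ par un argument standard d'approximation de Newton (lemme de Hensel). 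Enfin $X(K_\infty)\neq\emptyset$, le corps $\C((1/t))$ \'etant $C_1$ (Lang) et $q$ ayant au moins trois variables ; au total $X(\aA_K)\neq\emptyset$.

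Il reste \`a invoquer le corollaire \ref{troisvariables} : $X$ satisfait l'approximation forte sur $\Gamma$ hors de $\{\infty\}$, c'est-\`a-dire que l'image diagonale de $X(K)$ est dense dans la projection de $X(\aA_K)$ sur $X(\aA_K^\infty)$. Le sous-ensemble $\prod_{P\neq\infty}\mathcal X(R_P)$ est ouvert dans $X(\aA_K^\infty)$ (c'est un ouvert de base du produit restreint) et contient la projection du point ad\'elique assembl\'e ci-dessus ; il rencontre donc l'image de $X(K)$, ce qui fournit $\xi\in X(K)$ entier en toute place $P\neq\infty$, autrement dit $\xi\in\mathcal X(\C[t])$ : c'est la solution cherch\'ee. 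Je m'attends \`a ce que le seul vrai travail soit de nature comptable --- rendre pr\'ecis le dictionnaire entre ``solubilit\'e modulo les puissances de $\prod_i a_i$'' et ``existence de $R_P$-points pour $P\mid\prod_i a_i$'' (le passage des troncations finies au compl\'et\'e, assur\'e par le lemme de Hensel), puis v\'erifier la solubilit\'e locale automatique aux places restantes ; tout le contenu g\'eom\'etrique r\'eside d\'ej\`a dans le corollaire \ref{troisvariables}, donc en d\'efinitive dans le th\'eor\`eme de Harder.
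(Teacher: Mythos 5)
Votre argument est correct et correspond exactement \`a la d\'eduction que le papier laisse implicite (le corollaire \ref{quadaff} y est \'enonc\'e sans d\'emonstration) : on applique le corollaire \ref{troisvariables} avec $\Gamma=\P^1$ et $S=\{\infty\}$, apr\`es avoir traduit la solubilit\'e modulo les puissances de $\prod_i a_i$ en l'existence de points locaux entiers (Hensel/Newton aux places divisant $\prod_i a_i$, lissit\'e de la fibre sp\'eciale ailleurs, Tsen--Lang en $\infty$), puis on conclut par densit\'e de $X(K)$ dans l'ouvert ad\'elique $\prod_{P\neq\infty}\mathcal X(R_P)$. Les points que vous signalez comme ``comptables'' (rel\`evement des solutions modulo de hautes puissances, cas $b=0$) sont bien les seuls \`a v\'erifier et se traitent comme vous l'indiquez.
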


\begin{rema}
Sur un corps de nombres totalement imaginaire, pour une quadrique affine d'\'equation
 $ q(x_{1}, \dots, x_{n})=b,$ avec $q$ forme quadratique non d\'eg\'en\'er\'ee et $b \in k^*$,
 l'approximation forte vaut 
si $n \geq 4$, mais ne vaut pas en g\'en\'eral pour $n=3$ (cf. \cite{CTX}).
\end{rema}

\begin{theo}\label{descente}
Soit $\Gamma$ une courbe  connexe, projective et lisse sur $\C$, soit $K=\C(\Gamma)$ et soit $P_{0} \in \Gamma(\C)$.
Soit $G$ un $K$-groupe lin\'eaire. 
Soit $X$ une $K$-vari\'et\'e lisse g\'eom\'etriquement int\`egre et $Y \to X$ un torseur sur $X$
sous le $K$-groupe $G$. Si pour tout  1-cocycle $\zeta \in Z^1(K,G)$, l'espace total du torseur tordu 
 $Y^{\zeta} $ satisfait
l'approximation forte hors de $P_{0} $ et satisfait $Y^{\zeta}(K_{P_{0}}) \neq \emptyset$, alors
 $X$ satisfait l'approximation forte hors de $S$.
\end{theo}

\begin{proof}
Ceci se d\'eduit du th\'eor\`eme \ref{surjH1} par le m\^{e}me argument que pour
le th\'eor\`eme  
\ref{appfortesemi-simple}.
\end{proof}

Pour $n \geq 3$, le corollaire suivant est un cas particulier de \cite[Cor. 1.5]{ChZh}.

\begin{cor}\label{descentequadrique}
Soit $\Gamma$ une courbe  connexe, projective et lisse sur $\C$,  et soit $K=\C(\Gamma)$.
Soient $n\geq 2$ un entier et $q(x_{0},\dots,x_{n})$ une forme quadratique non d\'eg\'en\'er\'ee sur $K$.
Soit  $X$ le compl\'ementaire de la quadrique $q=0$ dans $\P^{n}$.
Pour tout ensemble fini non vide $S \subset \Gamma(\C)$, 
l'approximation forte 
  hors de $S$ vaut pour $X$.
\end{cor}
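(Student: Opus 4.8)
The plan is to realize the complement $X = \bP^n \setminus \{q=0\}$ as the base of an explicit torseur under a semi-simple simply connected group, so that Th\'eor\`eme \ref{descente} applies directly. Concretely, write $V = q^{-1}(1) \subset \aA^{n+1}$ for the affine quadric of equation $q(x_0,\dots,x_n)=1$; the natural map $V \to X$ sending a vector to the line it spans (this is well defined on $V$ since $q(v)=1\neq 0$, so $v\neq 0$) exhibits $V$ as a $\G_{m}$-torseur over $X$, the square of the tautological line bundle $\sO(-2)$ restricted to $X$. However, $\G_m$ does not satisfy strong approximation, so I would instead use the spinor covering: after choosing a $K$-point of $X$ (which exists since $K$ is $C_1$, so $q$ is isotropic and the quadric $q=0$ has a rational point, whence its complement is nonempty — in fact $X$ is $K$-rational), $V$ itself is a homogeneous space of $\operatorname{Spin}(q)$ with connected stabilizer, exactly as recalled in the proof of Corollaire \ref{troisvariables}. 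So $V$ satisfies strong approximation away from $S$ by Th\'eor\`eme \ref{appfortesemi-simple} (case $G$ semi-simple simply connected, $H$ connected), and moreover $V(K_{P})\neq\emptyset$ for every $P$ since $q$ is isotropic over every $K_P$ as well, $K_P$ being $C_2$ and $q$ having rank $\geq 3$.

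The remaining point is to pass from $V$ to $X$. Here I would apply Th\'eor\`eme \ref{descente} with the group $G = \G_{m,K}$ and the torseur $V \to X$. For each $1$-cocycle $\zeta \in Z^1(K,\G_m)$ the twisted torseur $V^{\zeta}$ is again a $\G_m$-torseur over $X$; since $H^1(K,\G_m)=0$ by Hilbert 90, every such $V^\zeta$ is isomorphic to $V$ as an $X$-variety, hence also satisfies strong approximation away from $P_0$ and has points over every $K_{P_0}$. Th\'eor\`eme \ref{descente} then yields strong approximation for $X$ away from $S$ (one first reduces $S$ to a single point $P_0$, as permitted). One has to check the hypotheses of Th\'eor\`eme \ref{descente} literally: $X$ is smooth and geometrically integral (it is a Zariski-open in $\bP^n_K$), and $G=\G_m$ is a linear $K$-group; the diagram chase behind Th\'eor\`eme \ref{descente} uses the surjectivity in Th\'eor\`eme \ref{surjH1} applied to $\G_m$, together with the exact sequence of pointed sets $G(\aA_K^{P_0}) \to V(\aA_K^{P_0}) \to \oplus_{P\neq P_0} H^1(K_P,\G_m)$, whose last term vanishes by Hilbert 90.

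The main obstacle is really a bookkeeping one: making sure the fibration $V \to X$ is literally a $\G_m$-torseur (not merely a $\G_m$-bundle) and that the twisted spaces $V^\zeta$ are the total spaces to which the strong approximation already proven applies. The cleanest route avoids $\G_m$ altogether: since $V \to X$ identifies $X$ with the quotient $V/\G_m$ and $V$ is itself a homogeneous space of $\operatorname{Spin}(q)$, one may present $X$ directly as a homogeneous space of $\operatorname{Spin}(q)$ with connected stabilizer (the stabilizer of a point of $V$ extended by $\G_m$), and then invoke Th\'eor\`eme \ref{appfortesemi-simple} in its case (iii) of Remarque \ref{ab} — $G$ semi-simple and $H$ connected — with no descent argument needed at all. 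I would adopt this second formulation as the actual proof: verify that $X$ is a homogeneous space of a semi-simple group with connected geometric stabilizers, note $X(K)\neq\emptyset$ because $K$ is $C_1$, and conclude by Th\'eor\`eme \ref{appfortesemi-simple}.
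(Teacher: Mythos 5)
Votre premier argument repose sur une affirmation fausse : le morphisme $V=\{q=1\}\to X$, $v\mapsto [v]$, n'est pas un torseur sous $\G_{m}$ mais sous $\mu_{2}$ — la fibre au-dessus d'une droite $[w]$ avec $q(w)=a\neq 0$ est $\{\lambda w:\lambda^{2}a=1\}$, soit deux points géométriques. (Le $\G_m$-torseur sur $X$ est le cône $\{q\neq 0\}\subset \aA^{n+1}$, et $X=V/\mu_{2}$, non $V/\G_{m}$.) L'étape « Hilbert 90, donc tous les tordus sont triviaux » s'effondre : $H^{1}(K,\mu_{2})=K^{*}/K^{*2}$ est non trivial, et les tordus de $V\to X$ sont précisément les quadriques affines $q=c$, $c\in K^{*}$. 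C'est exactement la démonstration du texte : on applique le théorème \ref{descente} au $\mu_{2}$-torseur $V\to X$, en utilisant le corollaire \ref{troisvariables} (valable en $n+1\geq 3$ variables) pour chaque tordu $q=c$, et le fait que toute forme non dégénérée en $\geq 3$ variables est universelle sur $K_{P_{0}}=\C((t))$.

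Votre « seconde formulation » comporte elle aussi une erreur : $X$ est bien un espace homogène de $\operatorname{Spin}(q)$ (transitivité géométrique par le théorème de Witt), mais le stabilisateur géométrique d'une droite anisotrope est l'image réciproque dans $\operatorname{Spin}(q)$ du sous-groupe $S(O_{1}\times O_{n})\simeq O_{n}$ de $SO(q)$ : il a deux composantes connexes. Il ne peut d'ailleurs pas être connexe, puisque $X$ admet le revêtement étale double connexe $V=\{q=1\}$ (donc $\pi_{1}^{\mathrm{g\acute{e}om}}(X)=\Z/2$), et la description « stabilisateur d'un point de $V$ étendu par $\G_{m}$ » n'a pas de sens dans $\operatorname{Spin}(q)$, qui ne contient pas $\G_{m}$ central agissant par homothéties. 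L'invocation du cas (iii) de la remarque \ref{ab} ($H$ connexe) n'est donc pas justifiée. La conclusion se sauve néanmoins : le théorème \ref{appfortesemi-simple} ne fait aucune hypothèse sur les stabilisateurs, donc la seule homogénéité de $X$ sous $\operatorname{Spin}(q)$ suffit — mais on utilise alors toute la force du théorème \ref{surjH1} pour un groupe non connexe, alors que la preuve du texte se ramène au corollaire \ref{troisvariables} et à la descente sous $\mu_{2}$ du théorème \ref{descente}. En l'état, votre rédaction présente donc deux lacunes concrètes (la nature du torseur, la connexité du stabilisateur) qu'il faut corriger dans l'une ou l'autre direction.
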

\begin{proof} 
Soit $Y\subset \A^{n+1}_{K}$ la quadrique d'\'equation $q(x_{0},\dots,x_{n})=1$.
La projection $Y \to X$ fait de $Y$ un torseur sur $X$ sous le groupe $\mu_{2}$.
On a $H^1(K,\mu_{2})=K^*/K^{*2}$, tout torseur tordu de $Y \to X$
a son espace total d\'efini par une \'equation 
 $q(x_{0},\dots,x_{n})=c$ pour $c \in K^*$ convenable.
 Le corollaire Ê\ref{troisvariables} et le th\'eor\`eme \ref{descente} donnent le r\'esultat.
 \end{proof}

\begin{rema}
Dans le cas  $\mu=\mu_{d}$, le groupe des
racines $d$-i\`emes de l'unit\'e, l'argument donn\'e au th\'eor\`eme \ref{descente}  est tr\`es proche de celui
utilis\'e dans  \cite[\S 5]{ChZh} pour passer   de l'approximation forte
hors de $P_{0}$ sur les hypersurfaces affines lisses d'\'equation $f(x_{0}, \dots, x_{n})=c$,
avec $c\neq 0$ et $f$ homog\`ene de degr\'e $d$ non singuli\`ere,
(approximation forte \'etablie dans  \cite{ChZh}  pour $d^2 \leq n+1$)
\`a l'approximation forte pour les ouverts de $\P^n_{K}$
d'\'equation $f(x_{0}, \dots, x_{n})\neq 0$. 
\end{rema}

\section{L'approximation forte ne vaut pas pour les tores}

\begin{prop}
 Pour $K=\C(\P^1)=\C(t)$,  pour tout ensemble fini $S$ de points de $\P^1(\C)$,
 l'approximation forte hors de $S$ est en d\'efaut pour la $K$-vari\'et\'e  $\G_{m,K}$.
 \end{prop}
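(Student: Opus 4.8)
The plan is to turn strong approximation into a density statement and then exhibit an explicit open set that the diagonal misses. First I would observe that $\G_{m,K}(K_P)=K_P^*$, that $\G_{m,K}(\aA_K)$ is the idele group $\aA_K^*$ and $\G_{m,K}(\aA^S_K)=(\aA^S_K)^*$, and that the projection $\aA_K^*\to(\aA^S_K)^*$ is surjective (extend an idele away from $S$ by $1$ at the places of $S$). Hence strong approximation of $\G_{m,K}$ outside $S$ amounts to the density of the diagonal image of $K^*$ in the topological group $(\aA^S_K)^*$, and to refute it I only need an open subgroup $V\subset(\aA^S_K)^*$ together with an element $a$ such that $aV\cap K^*=\emptyset$.

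The point that makes this not quite automatic is that the naive choice $V=\prod_{P\notin S}R_P^*$ fails as soon as $S\neq\emptyset$: the quotient of $(\aA^S_K)^*$ by the subgroup generated by $K^*$ and $\prod_{P\notin S}R_P^*$ is $\Pic(\P^1\setminus S)$, which vanishes because $\P^1$ has genus $0$ and all its closed points have degree $1$. So I would pass to a congruence subgroup, exploiting the following smallness: the group $E=\{f\in K^*: v_P(f)=0\text{ for all }P\notin S\}$ of units regular on $\P^1\setminus S$ is of the form $E=\C^*\cdot\langle h_1,\dots,h_r\rangle$, where $r=\max(|S|-1,0)$ and $h_1,\dots,h_r\in K^*$ are fixed functions with divisors supported on $S$ (on $\P^1$ a divisor of degree $0$ supported on $S$ is principal, and its function is determined up to $\C^*$). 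Conceptually the congruence quotient below is a ray class group of $\P^1\setminus S$, nontrivial precisely because $E$ is too small to surject onto it.

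Concretely, I would fix a point $P_1\in\P^1(\C)\setminus S$ and a function $u\in K^*$ with a simple zero at $P_1$, and let $V$ be the open subgroup of $(\aA^S_K)^*$ consisting of ideles $(x_P)_{P\notin S}$ with $x_P\in R_P^*$ for every $P$ and $v_{P_1}(x_{P_1}-1)\geq 2$. Writing $h_i=h_i(P_1)(1+\beta_i u+\cdots)$ in the completed local ring at $P_1$, which is isomorphic to $\C[[u]]$, I would set $\Lambda=\Z\beta_1+\cdots+\Z\beta_r\subset\C$, a countable subgroup, choose $\lambda\in\C\setminus\Lambda$, and let $a\in(\aA^S_K)^*$ be the idele with $a_{P_1}=1+\lambda u$ and $a_P=1$ for $P\neq P_1$. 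If $f\in K^*\cap aV$, then $v_P(f)=0$ for all $P\notin S$, so $f\in E$, say $f=c\,h_1^{m_1}\cdots h_r^{m_r}$ with $c\in\C^*$; reducing modulo $u^2$ yields $f\equiv c_0\bigl(1+(\sum_i m_i\beta_i)u\bigr)$ with $c_0\in\C^*$, whereas $f\in aV$ forces $f\equiv 1+\lambda u\pmod{u^2}$, so $\sum_i m_i\beta_i=\lambda\in\Lambda$ — a contradiction. Thus $aV$ is a nonempty open set disjoint from $K^*$, the diagonal image of $K^*$ is not dense in $(\aA^S_K)^*$, and strong approximation outside $S$ fails for $\G_{m,K}$. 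The only steps needing real care are the description of $E$ (genus $0$ and degree-one points of $\P^1$) and the verification that $V$ is open and a subgroup; the reduction modulo $u^2$ is a routine computation.
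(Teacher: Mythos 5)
Your proof is correct and follows essentially the same route as the paper: both restrict to idelic units outside $S$, use the explicit description of the functions in $K^*$ that are units at every place outside $S$ (namely $\C^*$ times a finitely generated group, $c\prod_i(t-e_i)^{n_i}$ in the paper), and conclude by the fact that $\C^*$ (resp.\ $\C$) is not finitely generated (not countable). The only difference is cosmetic: the paper imposes residue conditions at two auxiliary points and takes the ratio of the two evaluations to eliminate the constant $c$, whereas you impose a congruence modulo $\mathfrak{m}_{P_1}^2$ at a single point and compare the first-order coefficient, both variants of the same obstruction.
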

 \begin{proof}
 Pour $P \in \P^1(\C)$,
 notons 
 $U_{P} \subset  K_{P}^*$
 le groupe des unit\'es.
 On peut supposer que $S$ consiste en les points $t=e_{1}, \dots, e_{n}  , \infty$. 
Le groupe $M$  intersection de   $K^*$ et de  $\prod_{P \notin S} U_{P}$ est form\'e des \'el\'ements
de la forme $c. \prod_{i=1}^n (t-e_{i})^{n_{i}}$ avec $c \in \C^*$ et les $n_{i} \in \Z$.
  Fixons $a,b \in \C$ distincts et distincts des $e_{i}$. Si $K^*$ \'etait dense dans
$\prod_{P \notin S} U_{P} $
l'image de $M$
par l'\'evaluation simultan\'ee en $a$ et $b$ serait \'egale \`a $\C^* \times \C^*$.
Prenant le quotient sur les deux facteurs, ceci  impliquerait que tout \'el\'ement de $\C^*$
appartient au sous-groupe engendr\'e par les $(a-e_{i})/(b-e_{i})$, $i=1,\dots, n$, donc que $\C^*$
serait un groupe de type fini.
 \end{proof}

 \medskip
 
La proposition montre que l'approximation forte sur $\C[t]$ ne vaut pas pour une \'equation
$xy= 1$
sur $K=\C(t)$. La condition $n \geq 3$ dans le corollaire \ref{quadaff} est  donc n\'ecessaire.

\medskip

On peut aussi donner un
 un contre-exemple au principe local-global
 pour les solutions dans $\C[t]$ d'une \'equation
 $$a(t)x^2+b(t)y^2=c(t)$$
 avec $a(t), b(t), c(t) \in \C[t]$ non nuls.
 L'exemple suivant m'a \'et\'e communiqu\'e par D. Izquierdo :
$$ (t+1)x^2 + t^2y^2=1.$$
 Le lemme de Hensel donne des solutions dans tous les compl\'et\'es
 de $\C[t]$ en les points de $\A^1(\C)$, et  il y a une solution dans $\C(t)$
 par le th\'eor\`eme de Tsen. Mais il n'y a pas de solution  avec $x,y \in \C[t]$. C'est clair si
 $x=0$, et si $x \neq 0$
 le polyn\^{o}me $(t+1)x^2$ est de degr\'e impair  et le polyn\^{o}me  $t^2y^2$ est  de degr\'e pair, leur somme
 ne peut \^etre une constante.

\begin{rema}
Dans \cite[\S 3]{CTG} nous avons donn\'e un contre-exemple \`a l'approximation faible pour une surface d'Enriques $X/\C(t)$.
L'obstruction utilis\'ee est une obstruction de r\'eciprocit\'e associ\'ee \`a un \'el\'ement de $H^1_{\et}(X,\Z/2)$.
Comme $X$ est projective, cela donne imm\'ediatement un contre-exemple \`a l'approximation forte
en  dehors d'ensembles finis de places de $K$. 

La situation est l\`a tr\`es diff\'erente de celle
d'une isog\'enie de groupes lin\'eaires connexes, comme discut\'ee dans la d\'emonstration du th\'eor\`eme \ref{appfortesemi-simple}.
Dans \cite[\S 3]{CTG}, on a un $\Z/2$-torseur $Y \to X$,
d\'efinissant pour tout point $P$ une application $X(K_{P})\to H^1(K_{P},\Z/2)$.
Pour presque tout point $P$, cette application a son image r\'eduite \`a z\'ero.
Si par contre on consid\`ere une isog\'enie
$$ 1 \to \mu \to \tilde{G} \to G \to 1$$
avec $\tilde{G}$ et $G$ des $K$-groupes lin\'eaires connexes,
pour tout point $P$ l'application associ\'ee $G(K_{P}) \to H^1(K_{P},\mu)$
est surjective.
\end{rema}

\end{document}